\newtheorem{theorem}{Theorem}%[section]
\newtheorem{lemma}[theorem]{Lemma}
\theoremstyle{definition}
\theoremstyle{remark}
\def\le{\leqslant} \def\ge{\geqslant}
\begin{document}
\title[Finite abelian groups via congruences]{Finite abelian groups via congruences}
\author[Trevor D. Wooley]{Trevor D. Wooley}
\address{Department of Mathematics, Purdue University, 150 N. University Street, West Lafayette, IN 47907-2067, USA}
\email{twooley@purdue.edu}
\subjclass[2010]{20K01, 11A07}
\keywords{Finite abelian groups, reduced residues, $d$-th powers.}

\maketitle

\begin{abstract}
For every finite abelian group $G$, there are positive integers $n$ and $d$ such that $G$ is 
isomorphic to the multiplicative group of $d$-th powers of reduced residues modulo $n$.
\end{abstract}

\bigskip

As every beginning student in number theory learns, when $n\in \mathbb N$, the integers 
$a$ with $1\le a\le n$ and $(a,n)=1$ form an abelian group when equipped with the binary 
operation defined by multiplication modulo $n$. For each positive integer $d$, this 
multiplicative group $U_n$ of reduced residues modulo $n$ contains the subgroup of $d$-th 
powers, namely $U_n^{(d)}=\{ a^d:a\in U_n\}$. The goal of this note is to show that, in 
fact, every finite abelian group is isomorphic to one of the latter shape.

\begin{theorem} Let $G$ be a finite abelian group. Then there exist 
positive integers $n$ and $d$ having the property that $G\cong U_n^{(d)}$.
\end{theorem}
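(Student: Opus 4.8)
The plan is to realise $G$ as a direct product of cyclic groups, to build $U_n$ from a product of groups $U_{p_i}$ attached to distinct odd primes $p_i$ (one prime per cyclic factor of $G$), and then to find a single exponent $d$ that simultaneously trims each cyclic factor $\mathbb{Z}/(p_i-1)\mathbb{Z}$ down to the prescribed order.

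To set up, write $G\cong \mathbb{Z}/k_1\mathbb{Z}\times\cdots\times\mathbb{Z}/k_r\mathbb{Z}$ with each $k_i\ge 2$ (if $G$ is trivial, take $n=1$). Recall the two standard facts that $U_n\cong\prod_{p^a\,\|\,n}U_{p^a}$ by the Chinese Remainder Theorem, with $U_p$ cyclic of order $p-1$ for $p$ prime, and that for a cyclic group $C$ of order $m$ the subgroup $C^{(d)}$ is cyclic of order $m/(m,d)$, while $(\prod_i C_i)^{(d)}=\prod_i C_i^{(d)}$. Hence if $n=p_1\cdots p_r$ is a product of distinct odd primes then $U_n^{(d)}\cong\prod_{i=1}^r\mathbb{Z}/\bigl((p_i-1)/(p_i-1,d)\bigr)\mathbb{Z}$, so it suffices to choose the $p_i$ and $d$ so that $(p_i-1,d)=(p_i-1)/k_i$ for every $i$.

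The heart of the matter is an inductive construction of the primes using Dirichlet's theorem on primes in arithmetic progressions, and the one genuinely delicate point is the prime $2$: since $p_i-1$ is forced to be even while $k_i$ may be odd, I cannot arrange $(p_i-1)/k_i$ to be odd; instead I force the $2$-adic valuation of $p_i-1$ to equal that of $k_i$ plus one, uniformly in $i$, so that $r_i:=(p_i-1)/k_i$ has $2$-adic valuation exactly $1$ for every $i$. Writing $r_i=2r_i'$ with $r_i'$ odd, I arrange in addition that the $r_i'$ are pairwise coprime and each coprime to $k_1\cdots k_r$. Concretely, having chosen $p_1,\dots,p_{i-1}$, I require $p_i\equiv 1$ modulo the odd part of $k_i$, I require the $2$-adic valuation of $p_i-1$ to be one more than that of $k_i$, and for each odd prime $\ell$ dividing $k_1\cdots k_r\cdot r_1'\cdots r_{i-1}'$ I require $v_\ell(p_i-1)=v_\ell(k_i)$ (so that $\ell\nmid r_i$). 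By the Chinese Remainder Theorem these conditions describe a nonempty union of residue classes coprime to their modulus, so Dirichlet's theorem supplies such a prime $p_i$, which can be taken larger than $p_1,\dots,p_{i-1}$.

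Finally I would put $n=p_1\cdots p_r$ and $d=2\,r_1'\cdots r_r'$ and check that $(p_i-1,d)=r_i$ prime by prime: at $2$ both sides have valuation $1$; at an odd prime $\ell\mid r_i'$ the valuation of $d$ equals that of $r_i'$ because the $r_j'$ are pairwise coprime; at an odd prime $\ell$ dividing some $r_j'$ with $j\ne i$ one uses that $r_j'$ is coprime to $k_i$ to conclude $\ell\nmid(p_i-1,d)$; and every other odd prime divides neither side. This gives $U_n^{(d)}\cong\prod_i\mathbb{Z}/k_i\mathbb{Z}\cong G$. I expect the main obstacle to be exactly the two-fold bookkeeping just described — controlling the prime $2$ and securing simultaneous coprimality of the cofactors $r_i'$ — since once the congruences are arranged so that a single $d$ excises each factor by the right amount, the remainder is routine together with the appeal to Dirichlet's theorem.
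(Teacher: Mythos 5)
Your argument is correct, and it follows the same overall strategy as the paper --- decompose $G$ into cyclic factors, attach to each factor a prime $p_i\equiv 1\pmod{k_i}$ supplied by Dirichlet's theorem, and manufacture a single exponent $d$ from the cofactors $(p_i-1)/k_i$ --- but the technical heart is genuinely different. The paper insists on the invariant factor decomposition, in which every $m_i$ divides $m_1$, and chooses each $p_i$ \emph{independently} in the single progression $1+m_1m_i \pmod{m_1^2m_i}$; this forces the cofactor $y_i=(p_i-1)/(m_1m_i)$ to satisfy $y_i\equiv 1\pmod{m_1}$, hence to be automatically coprime to every $m_j$, so that $d=m_1y_1\cdots y_k$ works with no further bookkeeping, no induction on $i$, and no special treatment of the prime $2$ (the factor $m_1$ absorbs it). You instead allow an arbitrary cyclic decomposition and secure the needed coprimality of the cofactors by an inductive choice of the $p_i$ with prescribed $\ell$-adic valuations of $p_i-1$, which obliges you to single out $\ell=2$ and to track pairwise coprimality of the odd parts $r_i'$ by hand; I checked the valuation-by-valuation verification of $(p_i-1,d)=r_i$ and it goes through. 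Your route is more flexible (it applies, for instance, directly to the primary decomposition) at the cost of a longer verification; the paper's congruence $p_i\equiv 1+m_1m_i\pmod{m_1^2m_i}$ is precisely the observation that makes your entire case analysis collapse. Both proofs are complete and correct.
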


Standard introductions to algebra will describe the fundamental theorem of finitely 
generated abelian groups (see \cite[Theorem 3 of \S5.2]{DF2004}). Thus, writing 
$\mathbb Z_r$ for the additive group of integers modulo $r$, each finite abelian group 
$G$ is isomorphic to a direct product of the shape 
$\mathbb Z_{m_1}\times \mathbb Z_{m_2}\times \cdots \times \mathbb Z_{m_k}$, where 
the integers $m_1,\ldots ,m_k$ satisfy
\begin{equation}\label{1}
m_i\ge 2\quad \text{and}\quad m_{i+1}|m_i\quad (1\le i<k).
\end{equation}
Given distinct odd prime numbers $p_i$ with $p_i\equiv 1\, (\text{mod}\ m_i)$, say with 
$p_i=1+m_id_i$, one finds that $\mathbb Z_{m_i}$ is isomorphic to the group 
$U_{p_i}^{(d_i)}=\langle g_i^{d_i}\rangle$, where $g_i$ is any 
primitive root modulo $p_i$. Writing $n=p_1p_2\cdots p_k$, the Chinese Remainder 
Theorem delivers the familiar conclusion that $G$ is isomorphic to a subgroup of 
$U_n$. Thus, one has
\[
G\cong U_{p_1}^{(d_1)}\times U_{p_2}^{(d_2)}\times \cdots \times 
U_{p_k}^{(d_k)}.
\]
Although it is comforting to note that every finite abelian group is isomorphic to a subgroup 
of the multiplicative group of residues modulo $n$, for some $n\in \mathbb N$, the need to 
work with a collection of exponents $d_1,\ldots ,d_k$ corresponding to the prime divisors of 
$n$ is somewhat inelegant. The primary motivation for establishing our theorem is to 
provide a self-contained description of the abelian group $G$ with only a pair of integers. 

\par Our goal, of identifying a simple congruence-based realization of the abelian group 
$G$ of the shape $U_n^{(d)}$, would follow were one able to find distinct primes 
$p_1,\ldots ,p_k$ corresponding to a common value $d$ for the $d_i$. Indeed, it would 
then follow as a consequence of the Chinese Remainder Theorem that 
\[
G\cong U_{p_1}^{(d)}\times U_{p_2}^{(d)}\times \cdots \times U_{p_k}^{(d)}\cong 
U_n^{(d)}.
\]
This approach relies on a special case of Dickson's conjecture (see \cite{Dic1904}). When 
$k\ge 1$, this as yet unproven conjecture asserts that given $a_i\in \mathbb Z$ and 
$m_i\in \mathbb N$, there are infinitely many positive integers $d$ for which the 
$k$-tuple $(m_1d+a_1,\ldots ,m_kd+a_k)$ consists of prime numbers, unless there is a 
congruence condition preventing such from occurring. In the case presently of interest to 
us, in which $a_i=1$ for each $i$, congruence obstructions are absent, though the 
conjecture has been established only in the case $k=1$ (a special case of Dirichlet's theorem, for which 
see \cite[Corollary 4.10]{MV2007}). The challenge of obtaining an 
unconditional conclusion requires careful selection of arithmetic progressions in which to 
search for the primes $p_i$.

\begin{lemma} Suppose that the integers $m_1,\ldots ,m_k$ satisfy the condition 
\eqref{1}. Then there are distinct prime numbers $p_1,\ldots ,p_k$ satisfying the 
congruences 
\[
p_i\equiv 1+m_1m_i\ (\text{mod}\, {m_1^2m_i})\quad (1\le i\le k).
\]
\end{lemma}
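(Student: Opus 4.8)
The plan is to realize each prime $p_i$ via Dirichlet's theorem on primes in arithmetic progressions, handling the $k$ congruences one at a time and exploiting the infinitude of primes in each progression to guarantee that the $p_i$ can be taken pairwise distinct. The reason for working with the separate moduli $m_1^2m_i$, rather than trying to build a single common exponent $d$ directly into the progressions (which is what would require the unproven case $k>1$ of Dickson's conjecture), is that such a common exponent can instead be recovered afterwards from the resulting shape $p_i-1=m_1m_i(1+m_1s_i)$; for the lemma in isolation only Dirichlet's theorem is needed.

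First I would verify that, for each $i$ with $1\le i\le k$, the residue class $1+m_1m_i$ is coprime to the modulus $m_1^2m_i$. This is the one place where hypothesis \eqref{1} is used: the chain of divisibilities $m_{i+1}\mid m_i$ yields $m_i\mid m_1$, so every prime $q$ dividing $m_1^2m_i$ in fact divides $m_1$, and hence $1+m_1m_i\equiv 1\pmod{q}$. Thus no prime factor of the modulus divides $1+m_1m_i$, which gives $\gcd(1+m_1m_i,m_1^2m_i)=1$.

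With this coprimality in hand, Dirichlet's theorem (in the form of \cite[Corollary 4.10]{MV2007}) ensures that, for each $i$, there are infinitely many primes $p$ with $p\equiv 1+m_1m_i\pmod{m_1^2m_i}$. I would then choose the $p_i$ by induction on $i$: having selected distinct primes $p_1,\ldots ,p_{i-1}$ (vacuously for $i=1$), the progression $1+m_1m_i\pmod{m_1^2m_i}$ contains infinitely many primes and hence one differing from each of $p_1,\ldots ,p_{i-1}$; take $p_i$ to be such a prime. After $k$ steps the primes $p_1,\ldots ,p_k$ are pairwise distinct and satisfy the required congruences.

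There is no substantial obstacle in this argument: the only step carrying any content is the coprimality verification, and the lemma amounts to packaging the case $k=1$ of Dickson's conjecture (namely Dirichlet's theorem) together with the remark that distinctness costs nothing once infinitude is available.
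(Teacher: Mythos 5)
Your proof is correct and takes essentially the same route as the paper: verify that $(1+m_1m_i,m_1^2m_i)=1$, invoke Dirichlet's theorem for each progression, and use the infinitude of primes in each to pick the $p_i$ pairwise distinct. One small quibble: the coprimality does not actually require the hypothesis \eqref{1}, since any prime dividing $m_1^2m_i$ already divides $m_1m_i$ and hence cannot divide $1+m_1m_i$; the divisibility chain is needed later, in the deduction of the theorem, not here.
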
   

\begin{proof} Dirichlet's theorem on prime numbers in arithmetic progressions (see 
\cite[Corollary 4.10]{MV2007}) shows that for each $i$, there are infinitely many primes 
$p$ with $p\equiv 1+m_1m_i\ (\text{mod}\, {m_1^2m_i})$, since it is evident that 
$(1+m_1m_i,m_1^2m_i)=1$. The desired conclusion is immediate.
\end{proof}

\begin{proof}[The proof of the theorem] Let $G$ be a finite abelian group, whence there 
exist integers $m_1,\ldots ,m_k$ satisfying the condition (\ref{1}) for which 
$G\cong \mathbb Z_{m_1}\times \mathbb Z_{m_2}\times \cdots \times 
\mathbb Z_{m_k}$. Given distinct prime numbers $p_1,\ldots ,p_k$ supplied by the lemma, 
define the integers $u_i$ via the relation $p_i-1=m_1m_i(1+m_1u_i)$, and then write 
$y_i=1+m_1u_i$. Also, put $d=m_1y_1\cdots y_k$. Since $p_i-1=m_1m_iy_i$, we see that 
for $1\le i\le k$, one has $(p_i-1,d)=m_1y_iD_i$, where $D_i$ is the greatest common 
divisor of $m_i$ and
\[
\prod_{\substack{1\le j\le k\\ j\ne i}}(1+m_1u_j).
\]
Since $m_i|m_1$, this greatest common divisor is plainly $1$, and so $(p_i-1,d)=m_1y_i$, 
yielding $(p_i-1)/(p_i-1,d)=m_i$. Thus 
$U_{p_i}^{(d)}\cong \mathbb Z_{m_i}$ $(1\le i\le k)$. Consequently, if we put 
$n=p_1p_2\cdots p_k$, then it follows from the Chinese Remainder Theorem that
\[
\mathbb Z_{m_1}\times \mathbb Z_{m_2}\times \cdots \times \mathbb Z_{m_k}
\cong U_{p_1}^{(d)}\times U_{p_2}^{(d)}\times \cdots \times 
U_{p_k}^{(d)}\cong U_n^{(d)},
\]
whence $G\cong U_n^{(d)}$. This completes the proof of the theorem.
\end{proof}

The proof of our theorem is more or less algorithmic, in the sense that it shows how to 
determine the integers $n$ and $d$ from a standard presentation of the finite abelian 
group $G$. Bounds on these integers may be given in terms of the order $|G|$ of the 
abelian group $G$ of interest by using technology associated with Linnik's theorem 
\cite{Lin1944a, Lin1944b}. The latter shows that there is a positive number $L$ having the 
property that when $m$ and $a$ are integers with $m\ge 2$ and $(a,m)=1$, then the 
smallest prime number $p$ satisfying $p\equiv a\ (\text{mod}\, m)$ satisfies $p\le m^L$. 
In particular, Xylouris \cite{Xyl2011} has shown that such a prime exists with 
$p\le Cm^{5.18}$, for a suitable positive constant $C$. By employing such results, it would 
be possible to show that both $n$ and $d$ can be taken no larger than 
$\exp (c (\log |G|)^2)$, for a suitable positive constant $c$, though less profligate bounds 
might well be accessible.\par

Plucking the example of the abelian group $G=\mathbb Z_3\times \mathbb Z_{42}$ almost 
from thin air, we may illustrate the relative inefficiency of the method underlying our 
theorem. The smallest positive integer $d$ satisfying the property that $3d+1$ and 
$42d+1$ are simultaneously prime is $d=10$, so that 
\[
\mathbb Z_3\times \mathbb Z_{42}\cong U_{31}^{(10)}\times U_{421}^{(10)}\cong 
U_{13051}^{(10)}.
\]
Smaller realizations are given by 
\[
\mathbb Z_3\times \mathbb Z_{42}\cong U_{7}^{(8)}\times U_{337}^{(8)}\cong 
U_{2359}^{(8)}
\]
and
\[
\mathbb Z_3\times \mathbb Z_{42}\cong \mathbb Z_6\times \mathbb Z_{21}
\cong U_{13}^{(2)}\times U_{43}^{(2)}\cong U_{559}^{(2)}.
\]
Meanwhile, the approach suggested by the proof of our theorem asks that we seek  one 
prime congruent to $1+3\cdot 42=127$ modulo $3\cdot 42^2$, and a second congruent to 
$1+42^2$ modulo $42^3$. We find that $127$ and $1+42^2+42^3=75853$ are the 
smallest such primes. We then take $n=127\cdot 75853=9633331$ and 
$d=42\cdot 1\cdot 43=1806$. Thus, the realization of $G$ suggested by the proof of our 
theorem is
\[
\mathbb Z_3\times \mathbb Z_{42}\cong U_{127}^{(1806)}\times U_{75853}^{(1806)}
\cong U_{9633331}^{(1806)}.
\]
\medskip

\noindent {\bf Acknowledgment:} The author's work is supported by NSF grants 
DMS-1854398 and DMS-2001549. The author is grateful to the referees for their valuable 
suggestions and comments.


\begin{thebibliography}{1}

\bibitem{Dic1904}
L. E. Dickson, \emph{A new extension of Dirichlet's theorem on prime numbers}, 
Messenger of Math. \textbf{33} (1904), 155--161.

\bibitem{DF2004}
D. S. Dummit and R. M. Foote, \emph{Abstract algebra}, 3rd ed., Wiley, Hoboken, NJ, 2004.

\bibitem{Lin1944a}
Yu. V. Linnik, \emph{On the least prime in an arithmetic progression I. The basic theorem}, 
Rec. Math. [Mat. Sbornik] N.S. \textbf{15(57)} (1944), no. 2, 139--178.

\bibitem{Lin1944b}
Yu. V. Linnik, \emph{On the least prime in an arithmetic progression II. The 
Deuring-Heilbronn phenomenon}, Rec. Math. [Mat. Sbornik] N.S. \textbf{15(57)} (1944), no. 
3, 347--368.

\bibitem{MV2007} 
H. L. Montgomery and R. C. Vaughan, \emph{Multiplicative number theory I. Classical 
theory}, Cambridge Studies in Advanced Mathematics, \textbf{97}, Cambridge University 
Press, Cambridge, 2007.

\bibitem{Xyl2011}
T. Xylouris, \emph{On the least prime in an arithmetic progression and estimates for the 
zeros of Dirichlet $L$-functions}, Acta Arith. \textbf{150} (2011), no. 1, 65--91.

\end{thebibliography}
\end{document}